\documentclass[12pt]{article}
\usepackage{fullpage}
\usepackage{amssymb}
\usepackage{amsthm}
\usepackage[all,2cell]{xy}
\parskip 0.2cm
\newtheorem{theorem}{Theorem}[section]
\newtheorem{corollary}[theorem]{Corollary}

\newtheorem{lemma}[theorem]{Lemma}


\begin{document}
\title{Projective normality of Weyl group quotients}
\author{S.S.Kannan, S.K.Pattanayak  \\  
\\ Chennai Mathematical Institute, Plot H1, SIPCOT IT Park,\\ Padur 
Post Office, Siruseri, Tamilnadu - 603103, India.\\
kannan@cmi.ac.in, santosh@cmi.ac.in} 

\maketitle
\date{}

\begin{abstract} In this note, we prove that for the standard representation $V$of
the Weyl group $W$ of a semi-simple algebraic group of type $A_n, B_n, C_n, D_n,
F_4$ and $G_2$ over $\mathbb C$, the projective variety $\mathbb P(V^m)/W$ 
is projectively normal with respect to the descent of $\mathcal O(1)^{\otimes |W|}$,
where $V^m$ denote the direct sum of $m$ copies of $V$. 
\end{abstract}
\hspace*{4.5cm}Keywords: line bundle, polarizations. 
\section{Introduction}
Let $G$ be a semi-simple algebraic group over $\mathbb C$ and $W$ be the 
corresponding Weyl group. Let $V$ be the standard
representation of $W$. By Noether's theorem the $\mathbb C$-algebra of 
invariants $\mathbb C[V^m]^W$ is finitely generated, (see [14]), where $V^m$ 
denote the direct sum of $m$ copies of $V$. So it is an interesting
problem to study GIT- quotient varieties $V^m/G=Spec(\mathbb C[V^m]^W)$ and 
$\mathbb P(V^m)/W$,(see [12] and [13]). Also, $\forall x \in \mathbb P(V^m)$, 
the isotropy $W_x$ acts trivially on the fiber of the line bundle 
$\mathcal O(1)^{\otimes |W|}$ at $x$. Hence, by Proposition (4.2), page 83 of 
[10], the line bundle $\mathcal O(1)^{\otimes |W|}$ descends to the quotient 
$\mathbb P(V^m)/W$, where $\mathcal O(1)$ denotes the ample generator of the 
Picard group of $\mathbb P(V^m)$. We denote it by $\mathcal L$. On the other 
hand, $V^m/W$ is normal. So, it is a natural question to ask if 
$\mathbb P(V^m)/W$ is projectively normal with respect to 
the line bundle $\mathcal L$. In [9] it is shown that the projective variety
$\mathbb P(V^m)/W$ is projectively normal with respect to the line bundle 
$\mathcal L$ for $m=1$. In this paper we show that $\mathbb P(V^m)/W$ is 
projectively normal with respect to the line bundle $\mathcal L$ for any $m$. 
 At the end of the paper we give a counter 
example showing that the result does not hold for symmetric groups over a field 
of positive characteristic. 

The layout of the paper is as follows: 

Section 2 consists of preliminary notations and definitions.

Section 3 consists of the main theorem, a corollary and a counter example.

\section{Preliminary notations and definitions}

Let $G$ be a semi-simple algebraic group of rank $n$ over $\mathbb C$. 
Let $T$ be a maximal torus of $G$. Let $N_G(T)$ be the normaliser of $T$ in 
$G$ and let $W=N_G(T)/T$ be the Weyl group of $G$ with respect to $T$. 
Consider the standard representation $V = Lie(T)$ of $W$. For every integer 
$m \geq 1$, the group $W$ acts on the algebra $\mathbb C[V^m]$ of polynomial 
functions on the direct sum $V^m:=V \oplus \cdots \oplus V$ of $m$ copies of 
$V$ via the diagonal action 

\[(wf)(v_1, \cdots ,v_m) := f(w^{-1}v_1, \cdots ,w^{-1}v_m), f \in \mathbb 
C[V^m], w \in W.\]    

If $m=1$ then the algebra  $\mathbb C[V]^W$ of invariants in one vector 
variable is generated by $n$ algebraic independent homogeneous invariants 
$f_1, f_2, \cdots ,f_n$ of degrees $d_1,d_2, \cdots ,d_n$ respectively such 
that $\prod_{i=1}^nd_i= |W|$ by a theorem of Chevalley-Serre-Shephard-Todd 
(see [1],[8],[16],[17]). We will
refer to such a system of generators of  $\mathbb C[V]^W$ as a system of basic
invariants. Explicit systems of basic invariants are known for each type of
irreducible Weyl groups $W$ (see [11]). 

Although a system of generators for  $\mathbb C[V^m]^W$ is not given 
independent of the Weyl group $W$, the classical approach to find a system of 
generators for $\mathbb C[V^m]^W$ is by the method of polarization. For type 
$A_n$, by a theorem of Hermann Weyl (see [19]) the algebra $\mathbb 
C[V^m]^{S_n}$ is generated by polarizations of the elementary symmetric 
polynomials. For the Weyl groups of types $B_n$, $C_n$ and $G_2$, the algebra 
$\mathbb C[V^m]^{W}$ is generated by polarizations of the basic invariants 
(see page 811 of [18]). However Wallach (see [18]) showed that the 
polarizations of the basic invariants do not generate $\mathbb C[V^m]^{W}$ for 
type $D_n$ and for $m \geq2$. 

Now we will recall the definition of polarizations of a polynomial from page 5 
of [19]. Let $f \in \mathbb C[V]^W$ be a homogeneous polynomial of degree $d$. 
For $ v_1,v_2, \cdots ,v_m \in V$ and $t_1,t_2, \cdots ,t_m$ are 
indeterminates, we consider the function $f(\sum_it_iv_i)$. Then 

\begin{equation}
f(\sum_it_iv_i) = \mathop{\bigoplus}_ {\alpha \in (\mathbb
Z^+)^m,|\alpha|=d}f_{\alpha}(v_1,\cdots ,v_m)t^{\alpha},
\end{equation}
where the $f_{\alpha} \in  \mathbb C[V^m]^W$ are multihomogeneous of the 
indicated degree $\alpha$. Here for $\alpha = (a_1,a_2, \cdots , a_m) \in 
(\mathbb Z^+)^m$, we have $t^{\alpha}= t^{a_1} \ldots t^{a_m}$ and 
$|\alpha|= a_1+ \ldots +a_m$. We call the polynomials $f_{\alpha}$, the 
{\it polarizations} of $f$.  

Polarizations of a polynomial can also be defined in terms of some linear
differential operators called the polarization operators. Choosing a basis for 
$V$ and writing $v_i= (x_{i1}, \cdots, x_{in})$ we define 

\[D_{ij}= \sum_{k=1}^{n}x_{ik}\frac {\partial}{\partial x_{jk}}.\]

The operators $D_{ij}$'s are called polarization operators. They commute with 
the action of $W$ on $\mathbb C[V^m]$ and applying successively operators 
$D_{ij}$ $(i > j)$ to $f \in \mathbb C[V]^W$ we obtain precisely (up to a 
constant) the polarizations of $f$ in any number of variables. 

Before ending this section we will recall the following.

Let $G$ be a finite group and $V$ be a finite dimensional, faithful 
representation of $G$ over an algebraically closed field of characteristic 
not dividing $|G|$. Let $\mathcal O(1)$ denote the ample generator of the Picard
group of $\mathbb P(V)$. Let $\mathcal L$ denote the descent of the line bundle
$\mathcal O(1)^{\otimes |G|}$ to the quotient $\mathbb P(V)/G$.

A polarized variety $(X, \mathcal L)$ where $L$ is a very ample line bundle is 
said to be projectively normal if its homogeneous coordinate ring 
$\oplus_{n \in \mathbb
Z_{\geq 0}}(H^0(X, \mathcal L^{\otimes n}))$ is integrally closed. For a reference,
see exercise 3.18, page 23 of [4].

The polarized variety ( $\mathbb P(V)/G, \mathcal L$) is 
 \[Proj (\oplus_{n \in \mathbb Z_{\geq 0}}(H^0(\mathbb P(V), \mathcal 
O(1)^{\otimes n|G|}))^G) = Proj (\oplus_{n \in \mathbb 
Z_{\geq 0}}(Sym^{n|G|}(V^{*}))^{G}).\]

For a reference, see Theorem 3.14 and page 76 of [12].

Now we will state a combinatorial lemma which can be applied to prove our main 
theorem.

Let \underline{$d$}=$(d_1,d_2,\cdots d_r) \in \mathbb N^{r}$ and
$N=(\prod_{i=1}^{r}d_{i})$. Consider the semigroup 

$M_{\underline{d}}=\{(m_1,m_2,\cdots m_r) \in \mathbb Z_{\geq
0}^{r}:\sum_{i=1}^{r}m_id_i\equiv 0 \,\mbox{mod}\,N\}$ and the set 

$S_{\underline{d}}=\{(m_1,m_2,\cdots m_r) \in \mathbb Z_{\geq
0}^{r}:\sum_{i=1}^{r}m_id_i= N\}$.

\begin{lemma}
 $M_{\underline{d}}$ is generated by $S_{\underline{d}}$ for \underline{$d$} $\in
\mathbb N^{r}$.
\end{lemma}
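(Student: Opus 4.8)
The plan is to grade $M_{\underline{d}}$ by the \emph{level} $\ell(\underline{m}) = \big(\sum_{i=1}^r m_id_i\big)/N$, which is a nonnegative integer for every $\underline{m}\in M_{\underline{d}}$, and to show that every element of level $k$ is a sum of $k$ elements of $S_{\underline{d}}$ (the elements of level $1$). I would argue by induction on $k$, the cases $k=0$ (only $\underline{m}=0$) and $k=1$ (the elements of $S_{\underline{d}}$ themselves) being immediate. For the inductive step it suffices to prove the following \emph{peeling statement}: if $\sum_i m_id_i=kN$ with $k\ge 1$, then there is a vector $\underline{s}$ with $0\le s_i\le m_i$ for all $i$ and $\sum_i s_id_i=N$. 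Granting this, $\underline{m}-\underline{s}$ lies in $M_{\underline{d}}$ and has level $k-1$, so the induction on $k$ closes at once. Observe at the outset that since $N=\prod_i d_i$, each $d_i$ divides $N$; in particular every ``pure'' vector $(N/d_i)e_i$ lies in $S_{\underline{d}}$, so the monoid generated by $S_{\underline{d}}$ already contains all vectors supported on a single coordinate, and the whole difficulty is in producing the mixed summands.

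The peeling statement uses only that each $d_i$ divides the modulus and that the total weight is a multiple of it, so I would prove it by strong induction on the modulus $N$. Two cases reduce immediately. First, if some coordinate $i$ with $m_i\ge 1$ has $d_i=N$, then $e_i$ is already the required sub-vector. Second, let $g=\gcd(d_1,\ldots ,d_r)$; if $g>1$, then each $d_i/g$ is a positive integer dividing $N/g$, the hypothesis becomes $\sum_i m_i(d_i/g)=k(N/g)$, and a sub-vector of weight $N/g$ for the data $(d_i/g)$ is exactly a sub-vector of weight $N$ for the original data. Since $N/g<N$, the induction hypothesis on the modulus applies and finishes this case. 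Thus we may assume $\gcd(d_1,\ldots ,d_r)=1$ and that every $d_i$ is a proper divisor of $N$, so $d_i\le N/2$.

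In the remaining case I would use an extremal argument. Among all sub-vectors $\underline{s}\le\underline{m}$ with $\sum_i s_id_i\le N$, choose one, say $\underline{s}^{*}$, of maximal weight $\sigma$, and suppose for contradiction that the gap $g_0:=N-\sigma$ is positive. Maximality forces every coordinate with $d_i\le g_0$ to be used up ($s^{*}_i=m_i$); and since the total weight $kN$ exceeds $\sigma$, at least one coordinate is not used up, any such coordinate necessarily having $d_i>g_0$. The goal is to turn this configuration into a contradiction. The essential leverage is that the total weight is not merely $\ge N$ but an exact multiple of $N$, together with the relations $d_i\mid N$: these congruences forbid the ``stuck'' configurations. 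Concretely I would examine the unused weight $\sum_i(m_i-s^{*}_i)d_i=(k-1)N+g_0$, carried entirely by coordinates with $d_i>g_0$, and perform an exchange — insert one oversized unused coin of value $d_j>g_0$ and delete a suitable collection of already-used small coins of total value $d_j-g_0$ — so that the net change in weight is exactly $g_0$ and $\underline{s}^{*}$ is pushed up to weight $N$. The divisibility relations $d_i\mid N$ are what should make the required small collection available. This exchange, driven by the congruence $\sum_i m_id_i\equiv 0\ \mbox{mod}\ N$, is where essentially all of the work lies.

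I expect the main obstacle to be precisely this last exchange: after reducing to $\gcd=1$ with all $d_i$ proper divisors of $N$, one must show that a maximal under-filled selection cannot have a positive gap. The congruence condition is genuinely needed here and not merely for bookkeeping — for the data $\underline{d}=(2,3)$, where $N=6$, the vector $(2,1)$ has weight $7>6$ yet admits no sub-vector of weight $N$, and it is exactly the failure of $7\equiv 0\ \mbox{mod}\ 6$ that is responsible. The surrounding structure — the grading, the induction on $k$, and the reductions by a common factor and by an exact divisor — is routine; the arithmetic of the congruence-driven exchange is the crux.
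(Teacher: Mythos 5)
Your outer structure is fine, and several of your reductions are genuinely correct: the grading by level, the equivalence of the lemma with the peeling statement, the passage to the more general statement in which the $d_i$ are arbitrary divisors of the modulus $N$ (needed for your induction on $N$ to even make sense), the disposal of the case $d_i=N$, and the division by $g=\gcd(d_1,\ldots,d_r)$. But the argument stops exactly where the lemma lives. The step asserting that a weight-maximal sub-vector $\underline{s}^{*}$ with $\sum_i s^{*}_id_i\le N$ cannot have a positive gap $g_0$ is not proved; it is described as an exchange you ``expect'' to perform, and you say yourself it is where essentially all of the work lies. A proposal whose self-identified crux is left as an expectation is not a proof. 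Worse, the specific exchange you propose --- insert one unused coin of value $d_j>g_0$ and delete used coins of total value exactly $d_j-g_0$ --- cannot be established from the structural facts your maximality argument actually yields, namely: all coins of value $\le g_0$ are fully used, all unused coins have value $>g_0$, and the unused weight equals $(k-1)N+g_0$. Take $N=12$, used coins $\{6,4\}$ (so $\sigma=10$, $g_0=2$) and unused coins $\{4,4,3,3\}$: the total is $24=2N$, there are no coins of value $\le 2$, every unused value exceeds $2$, and the values $6,4,3$ divide $12$ with $\gcd$ equal to $1$ --- yet the used multiset $\{6,4\}$ has no sub-multiset of weight $d_j-g_0\in\{1,2\}$, so no single-coin exchange exists. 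This configuration is of course not weight-maximal (note $4+4+4=12$), but that is precisely the point: to rule it out you must invoke global maximality itself, i.e.\ the nonexistence of any sub-vector of weight in $(\sigma,N]$, and you give no mechanism at all for converting that hypothesis into the required collection of small used coins. The congruence $\sum_i m_id_i\equiv 0 \bmod N$, which you rightly identify as essential, enters your sketch only as a slogan.

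It may help to see where the difficulty actually sits. If $N$ is a prime power, all coin values are powers of $p$ and hence form a chain under divisibility; then a greedy argument closes the gap with no congruence hypothesis beyond total weight $\ge N$ (sort coins in decreasing order: each prefix sum and $N$ are both divisible by the next coin's value, so the prefix sums cannot jump over $N$). The genuine problem is the mixed-prime case, where divisors of $N$ are not totally ordered by divisibility --- exactly the regime of your $\underline{d}=(2,3)$, $N=6$ warning example --- and it is there that your proposal contains no argument. For the record, the paper you are being compared against does not prove the lemma internally either: its entire proof is a citation of Lemma 2.1 of [9], where a complete combinatorial argument is given. Your reduction to the peeling statement is consistent with how that lemma is used, but the core subset-sum assertion remains unproved in your write-up, so the proposal has a genuine gap rather than an alternative proof.
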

\begin{proof}

See lemma (2.1) of [9].

\end{proof}

\section{Main Theorem}
In this section we will prove our main theorem.

\begin{theorem}
Let $G$ be a semi-simple algebraic group of type $A_n,B_n,C_n,D_n,F_4$ or 
$G_2$. Let $W$ denote the corresponding Weyl group. Let $V$ be the standard 
representation of $W$. Then $\mathbb P(V^m)/W$ is projectively normal with 
respect to the line bundle
$\mathcal O(1)^{\otimes |W|}$. 
\end{theorem}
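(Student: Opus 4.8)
The plan is to reduce projective normality to a statement about generation in a single degree and then feed it into the combinatorial lemma recalled above. By the description given earlier, the homogeneous coordinate ring of $(\mathbb P(V^m)/W,\mathcal L)$, with $N=|W|$, is
$$R=\bigoplus_{n\ge 0}\bigl(Sym^{nN}((V^m)^{*})\bigr)^W,$$
the subring of $A=\mathbb C[V^m]^W$ spanned by the multihomogeneous invariants whose total degree is divisible by $N$. Since $A$ is the invariant ring of a finite group acting on a polynomial ring it is integrally closed, and the Veronese-type subring $R$ of a normal graded domain is again normal; hence $R$ is integrally closed. Thus projective normality follows once I show that $R$ is generated as a $\mathbb C$-algebra by its first piece $R_1=(Sym^{N}((V^m)^{*}))^W$, i.e. that every invariant of total degree $nN$ is a product of $n$ invariants of total degree $N$.

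First I would fix a system of basic invariants $f_1,\dots,f_n$ of $\mathbb C[V]^W$ of degrees $d_1,\dots,d_n$, so that $\prod_i d_i=|W|=N$, which is exactly the hypothesis under which the combinatorial lemma applies to $\underline d=(d_1,\dots,d_n)$. The key structural input I would isolate is that $\mathbb C[V^m]^W$ is generated, as a $\mathbb C$-algebra, by multihomogeneous invariants each of whose total degree lies in $\{d_1,\dots,d_n\}$. For types $A_n$ (Weyl) and $B_n,C_n,G_2$, and likewise $F_4$, this is immediate, since the ring is generated by the polarizations of the basic invariants and a polarization of $f_i$ has total degree $d_i$. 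For type $D_n$, where Wallach's result shows polarizations are insufficient, I would verify that the extra generators one must adjoin — the invariants built by choosing, for each coordinate, one of the $m$ vectors and symmetrizing the resulting product over $W$ — are all multihomogeneous of total degree $n=d_n$; with this, the same statement holds.

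With that input in hand the main step is purely combinatorial. Take a multihomogeneous $g\in R_n=A_{nN}$ and write it as a sum of monomials in the chosen generators; it suffices to treat one such monomial $g_1\cdots g_k$, where each factor $g_j$ has total degree some $d_{i(j)}$ and $\sum_j d_{i(j)}=nN$. Assigning each factor to a slot $i$ with $d_i$ equal to its total degree and letting $m_i$ count the factors in slot $i$, I obtain $(m_1,\dots,m_n)\in\mathbb Z_{\ge 0}^n$ with $\sum_i m_i d_i=nN\equiv 0\ (\mathrm{mod}\,N)$, i.e. a point of $M_{\underline d}$. The combinatorial lemma then writes $(m_1,\dots,m_n)=\sum_l s^{(l)}$ with each $s^{(l)}\in S_{\underline d}$, that is $\sum_i s^{(l)}_i d_i=N$. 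This decomposition lets me partition the factors $g_1,\dots,g_k$ into blocks, the $l$-th block consisting of $s^{(l)}_i$ of the slot-$i$ factors; each block is a product of invariants whose total degrees sum to $N$, hence an element of $R_1$, and $g_1\cdots g_k$ is the product of these $n$ blocks. Therefore $R$ is generated in degree one, completing the proof.

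I expect the genuine obstacle to be the structural input rather than the combinatorics, namely establishing that $\mathbb C[V^m]^W$ is generated by invariants of total degrees among the $d_i$. For $A_n,B_n,C_n,G_2,F_4$ this rests on the polarization theorems cited above, but for $D_n$ one must exhibit an explicit finite generating set — polarizations together with the symmetrized coordinate products — and check both that it generates and that every generator has total degree $n$. A secondary, minor point is that when two basic degrees coincide one only needs \emph{some} valid slot-assignment, which always exists since each generator's total degree equals one of the $d_i$.
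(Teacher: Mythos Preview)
Your reduction of projective normality to generation of $R$ in degree one, and the combinatorial step via Lemma~2.1, are exactly what the paper does. The genuine gap is in the structural input for types $D_n$ and $F_4$.

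For $D_n$, your description of the extra generators and your degree claim are both wrong. By Hunziker, beyond polarizations one needs the invariants $P_{r_1}\cdots P_{r_l}(f_n)$ with $r_i\ge 1$ odd and $\sum_i r_i\le n-l$, where $P_r=\sum_k x_{2k}^r\,\partial/\partial x_{1k}$; since $P_r$ raises total degree by $r-1$, these have total degree $n+\sum_i r_i-l$, ranging up to $2n-2$, not all equal to $n$. When $n$ is even these degrees are even and happen to lie among the $d_i=\{2,4,\dots,2n-2,n\}$, so the slot-assignment and the combinatorial lemma still apply. But when $n$ is odd the extra generators have \emph{odd} degrees strictly larger than $n$ (for instance, $n=5$ gives $P_3(f_5)$ of degree $7$), while $n$ is the only odd number among the $d_i$; there is then no slot to put such a factor in, and your partition-into-blocks argument breaks down. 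The paper fixes this with a different idea: embedding $W=W(D_n)$ in $W'=W(B_n)$ via the extra reflection $\sigma$, one shows every homogeneous odd-degree $W$-invariant satisfies $\sigma(f)=-f$, hence any product of two odd-degree $W$-invariants is $W'$-invariant and thus expressible in the even-degree (polarization) generators for $B_n$. A parity count then forces every monomial contributing to $R_q$ to involve an even number of odd-degree factors, reducing to the even-degree situation where the combinatorial lemma applies.

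For $F_4$, the assertion ``likewise $F_4$'' is unsupported: the polarization-generates theorem you cite (Wallach) covers $B_n,C_n,G_2$ but not $F_4$. The paper does not attempt to find generators for $\mathbb C[V^m]^{W(F_4)}$ at all; instead it observes that $|W(F_4)|=2^7\cdot 3^2$ and $|W(G_2)|=2^2\cdot 3$, so by Burnside's $p^aq^b$-theorem both groups are solvable, and then invokes an earlier projective-normality result for solvable groups (Proposition~1.1 of reference~[9]).
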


\begin{proof}

By a theorem of Chevalley-Serre-Shephard-Todd (see [1],[8],[16],[17]), the 
$\mathbb C$- algebra $\mathbb C[V]^W= (Sym(V^{*}))^W$ is a polynomial ring 
$K[f_1,f_2,\cdots, f_n]$ with each $f_i$ is a homogeneous polynomial of degree 
$d_i$ and $\prod_{i=1}^{n}d_i=|W|$. 

Let $R:=\oplus_{q\geq 0}R_q$; where $R_q:= (Sym^{q|W|}V^m)^W$. First we show that
$R$ is generated as a $\mathbb C$-algebra by $R_1$ by dealing with case by case.

${\underline {\bf Type \,\, A_n, B_n, C_n:}}$

For the diagonal action of the Weyl group on $V^m$, in type $A_n$ by H. Weyl 
(see pages 36-39 of [19]) and in type $B_n$ and $C_n$ by a theorem of Wallach 
(see page 811 of [18]), the algebra $\mathbb C[V^m]^W$ is generated by 
polarizations of the system of basic invariants $f_1,f_2,\cdots, f_n$.

For each $i \in \{1,2, \cdots ,n\}$, let $\{f_{ij}: j= 1,2, \cdots a_i\}$ 
denote the polarizations of $f_i$ where $a_i$ is a positive integer. Since 
the polarization operators $D_{ij}= \sum_{k=1}^{n}x_{ik}\frac {\partial}
{\partial x_{jk}}$ do not change the total degree of the original polynomial, 
we have 
\begin{equation}
 \mbox {degree of } f_{ij} =  \mbox {degree of } f_i = d_i, \,\, \forall \,\, j=
1,2, \cdots a_i.
\end{equation} 

Let us take an invariant polynomial $f \in (Sym^{q|W|}(V^m))^W$. Since $f_{ij}$'s
generate $\mathbb C[V^m]^W$ with out loss of generality we can assume $f$ is a
monomial of the form $\prod_{i=1}^n\prod_{j=1}^{a_i}f_{ij}^{m_{ij}}$. 

Since  $f = \prod_{i=1}^n\prod_{j=1}^{a_i}f_{ij}^{m_{ij}} \in (Sym^{q|W|}(V^m))^W$,
we have 
\[\sum_{i=1}^n\sum_{j=1}^{a_i}m_{ij}d_i = q|W|= q( \prod_{i=1}^nd_i)\]

Let $m_i = \sum_{j=1}^{a_i}m_{ij}$ then we have $\sum_{i=1}^nm_{i}d_i = q(
\prod_{i=1}^nd_i)$, and hence $(m_1, m_2, \cdots ,m_n)$ is in the semigroup
$M_{\underline d}=\{(m_1,m_2,\cdots m_r) \in \mathbb Z_{\geq
0}^{r}:\sum_{i=1}^{r}m_id_i\equiv 0 \,\mbox{mod}\,N\}$. 

By lemma (2.1), the semigroup $M_{\underline d}$ is generated by the set
$S_{\underline d}=\{(m_1,m_2,\cdots m_n) \in \mathbb Z_{\geq
0}^{n}:\sum_{i=1}^{r}m_id_i= \prod_{i=1}^nd_i\}$. So there exists $(m'_1,m'_2,
\cdots m'_n) \in \mathbb Z_{\geq 0}^{n}$ such that for each $i$
 \[m'_i < m_i \,\, \mbox {and} \,\,\sum_{i=1}^{r}m'_id_i= \prod_{i=1}^nd_i.\]

Again, since $m'_i < m_i= \sum_{j=1}^{a_i}m_{ij}$, for 
each $i$ and $j$ there exists $m'_{ij} \leq m_{ij}$ such that 
\[m'_i =  \sum_{j=1}^{a_i}m'_{ij}.\]

Then $g= \prod_{i=1}^n\prod_{j=1}^{a_i}f_{ij}^{m'_{ij}}$ is $W$-invariant and is in
$(Sym^{|W|}(V^m))^W$.

Let $f'= \frac {f}{g}$. Then $f' \in (Sym^{(q-1)|W|}(V^m))^W$ and so by 
induction on $q$, $f'$ is in the subalgebra generated by  $(Sym^{|W|}(V^m))^W$.

Hence $f=g.f'$ is in the subalgebra generated by $(Sym^{|W|}(V^m))^W$.   

${\underline {\bf Type \,\, D_n:}}$

Before proving the theorem for this case let us recall the action of the Weyl 
group of type $B_n$ and $D_n$ on the euclidean space $\mathbb R^n$. Let $W$ 
and $W'$ denote the Weyl group of type $D_n$ and $B_n$ respectively. Then $W'$ 
acts on $x=(x_1,x_2, \cdots ,x_n) \in \mathbb R^n$ by permutation of 
$x_1,x_2, \cdots ,x_n$
and the sign changes $x_i \rightarrow -x_i$ and the group $W$ acts on $x$ 
by permuting the coordinates and changes an even number of signs. Then it is 
clear that the group $W'$ is generated by the group $W$ and a reflection 
$\sigma$ defined by 
\[\sigma (x_1,x_2, \cdots ,x_{n-1},x_n)= (x_1,x_2, \cdots ,x_{n-1}, -x_n).\]

From [6] we  can take the polynomials  \[f_i = \sum_{k=1}^n x_k^{2i}, \,\, 
i= 1,2, \cdots , n-1\]
  and \hspace{4.4cm} $f_n = x_1.x_2 \ldots x_n$  \\
for the basic invariants of $\mathbb C[V]^W$.

For $\mathbb C[V]^{W'}$ we can take the basic invariants 
the polynomials \[f_i = \sum_{k=1}^n x_k^{2i}, \,\, i= 1,2, \cdots , n-1\]
  and \hspace{4.4cm} $f'_n = \sum_{k=1}^n x_k^{2n}$.

For odd $r \geq 1$, define the operator \[ P_r:= \sum_{k=1}^nx_{2k}^r \frac
{\partial}{\partial x_{1k}},\]  where $x_{1k},x_{2k}$ are standard coordinates of
$\mathbb R^{2n}$. The operator $P_r$ commutes with the diagonal action of $W$ 
and $W'$ on $\mathbb C[V^2]$ and preserves  $\mathbb C[V^2]^W$.

Now by [7] and [17] the algebra $C[V^2]^W$ is generated by the polarizations 
of the basic invariants $f_1, f_2, \cdots ,f_n$ and the polynomials 
\[P_{r_1}\cdots P_{r_l}(f_n) \,\, \,\, (r_i \geq 1 \,\, \mbox{ odd},\,\, 
\sum_{i=1}^lr_i \leq n-l).\] 

Note that the degree of the polynomial $P_{r}(f_n)$ is $n+r-1$ and so the 
degrees of the polynomials $P_{r_1}\cdots P_{r_l}(f_n), \,\, (r_i \geq 1 \,\, 
\mbox{ odd},\,\, \sum_{i=1}^lr_i \leq n-l)$ are \[n+(r_1+r_2+ \ldots +r_l)-l 
\leq 2n-2.\]

So $\mathbb C[V^2]^W$ is generated by homogeneous polynomials of degree 
$\leq 2n-2$.

Now we will prove the theorem for type $D_n$ by dealing with two cases.

\underline {Case -1:} n is even

In this case note that the degrees of the basic invariants $f_1, f_2, \cdots ,
f_n$ are all even. So the degrees of the polynomials $P_{r_1}\cdots 
P_{r_l}(f_n), \,\, (r_i \geq 1 \,\, \mbox{ odd},\,\, \sum_{i=1}^lr_i \leq n-l)$ 
are all even. Since the polarizations of the basic invariants have the same 
degrees as the basic invariants, we conclude that in this case the algebra 
$\mathbb C[V^2]^W$ is generated by homogeneous polynomials of even degrees 
less than or equal to $2n-2$.

Now for $m > 2$, by theorem (3.4) of [7], the algebra $\mathbb C[V^m]^W$ 
is generated by the polarizations of the generators of $\mathbb C[V^2]^W$. 
Again since the polarization operators do not change the degree of the 
original polynomial we conclude that the algebra $\mathbb C[V^m]^W$ is 
generated by homogeneous polynomials of even degrees same as the degrees of 
the basic invariants. So in this case we can employ the same proof as in the 
case of type $A_n,B_n$ and $C_n$.

\underline {Case -2:} n is odd

In this case since the degree of the basic invariant $f_n$ is 
odd and $r_i$'s are all odd, we have degrees of all the polynomials 
$P_{r_1}\cdots P_{r_l}(f_n), \,\, (r_i \geq 1 \,\, \mbox{ odd},\,\, 
\sum_{i=1}^lr_i \leq n-l)$ are odd.

Again, since for $m > 2$, the algebra $\mathbb C[V^m]^W$ is generated by the
polarizations of the generators of $\mathbb C[V^2]^W$, among the generators of
$\mathbb C[V^m]^W$ we have some odd degree invariants as well which are not
necessarily having the same degrees as the degree of $f_n$. 

Now, let us take one odd degree invariant $f \in \mathbb C[V^m]^W$ and write 
\[ f = \frac {f-\sigma (f)}{2}+ \frac {f+\sigma (f)}{2}\] where $\sigma$ is 
the reflection $(x_1,x_2, \cdots ,x_{n-1},x_n) \rightarrow (x_1,x_2, \cdots
,x_{n-1}, -x_n)$ defined before. 

Since the $W$ is a normal subgroup of the Weyl group $W'$ of type $B_n$  and 
$W'$ is generated by $W$ and $\sigma$, we have \[ \frac {f+\sigma (f)}{2} \in 
\mathbb C[V^m]^{W'}\]

Again, since $f$ is homogeneous of odd degree, the degree of 
$\frac {f+\sigma (f)}{2}$ is odd and hence equal to $0$ since 
$\mathbb C[V^m]^{W'}$ is generated by polarizations of the basic invaraints 
$f_1,f_2, \cdots ,f_{n-1},f'_n$ which are all of even degrees. Hence, for an 
odd degree invariant $f \in \mathbb C[V^m]^{W'}$, we have 
\[ \sigma (f) = -f.\]       
 
So for any $W$-invariant polynomials $f$ and $g$ of odd degrees we have $\sigma
(f.g) = fg$ and hence we conclude that $f^2, g^2$ and $fg$ are in $\mathbb
C[V^m]^{W'}$.

Now let us take a typical invariant monomial \[ f =
(\prod_{i}\prod_{j=1}^{a_i}f_{ij}^{m_{ij}})h_1^{l_1}h_2^{l_2}\ldots h_p^{l_p} \in
(Sym^{q|W|}V^m)^W\] where $g_{ij}$'s $\in \mathbb C[V^m]^W$ are the even degree
invariants of degrees $d_1,d_2, \cdots ,d_{n-1}$ obtained by taking the
polarizations of the even degree generators of $\mathbb C[V^2]^W$ and $h_i$'s 
$\in \mathbb C[V^m]^W$ are the odd degree invariants obtained by taking the 
polarizations of the odd degree generators of $\mathbb C[V^2]^W$.

Again since $h_i^2$ and $h_i.h_j$ are in $\mathbb C[V^m]^{W'}$, they are 
polynomials in $g_i$'s and the polarizations of the even degree basic 
invariant $f'_n$. So we may assume that $\sum_{i=1}^pl_i= 0$ or 1.

Suppose $\sum_{i=1}^pl_i= 1$, then f  is of the form \[f
=(\prod_{i}\prod_{j=1}^{a_i}f_{ij}^{m_{ij}}).h \in \mathbb C[V^m]^{W},\] 

where $h$ is of odd degree, say $t$. So we have 
\[\sum_{i}\sum_{j=1}^{a_i}m_{ij}d_i +t = q.|W|.\]

This is not possible since $d_i$'s are all even and $|W|$ is even. So we 
conclude that $\sum_{i=1}^pl_i= 0$ and hence $f$ is of the form 
$g_1^{m_1}g_2^{m_2}\ldots g_r^{m_r}$ where $g_i$'s are all of even degrees less 
than equal to $2n$. So in this case we can proceed the proof as in the case 
of Type $A_n, B_n$ and $C_n$.

${\underline {\bf Type F_4 \,\, and \,\, G_2:}}$

Since the cardinality of the Weyl group of Type $G_2$ is $12 = 2^2.3$ and the
cardinality of the Weyl group of Type $F_4$ is $1152 = 2^7.3^2$, by Burnside's 
$p-q$ theorem ( see page 247 of [15]), they are solvable. Hence the result is 
true for each case by proposition (1.1) of [9].

Now the proof of the theorem follows from exercise $5.14(d)$, Chapter II of 
[4]. 
\end{proof}

We deduce the following Result of Chu-Hu-Kang (see [2]) as a consequence of Theorem 3.1.

\begin{corollary}
Let $G$ be a finite group and $U$ be any finite dimentional representation of 
$G$ over $\mathbb C$. Let $\mathcal L$ denote the descent of 
$\mathcal O(1)^{\otimes n!}$. Then $\mathbb P(U)/G$ is projectively normal 
with respect to $\mathcal L$.

\end{corollary}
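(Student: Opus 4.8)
The plan is to follow the proof of Theorem 3.1 closely, reducing the assertion to a statement about a single invariant ring. Writing $n=|G|$, so that $n!=|S_n|$, I first record Cayley's embedding $G\hookrightarrow S_n$ into the symmetric group on the underlying set of $G$; this is the point at which the type $A_{n-1}$ Weyl group of Theorem 3.1 enters, and it is what singles out the exponent $n!$. As in the theorem, set $R=\bigoplus_{q\geq 0}R_q$ with $R_q=(\mathrm{Sym}^{q\,n!}(U^{*}))^{G}$, and identify $(\mathbb P(U)/G,\mathcal L)$ with $\mathrm{Proj}\,R$ exactly as on the pages cited for Theorem 3.1 (here $n!$ is a multiple of $|G|$, so $\mathcal O(1)^{\otimes n!}$ still descends). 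Then it suffices to prove that $R$ is generated as a $\mathbb C$-algebra by its degree-one piece $R_1$, for then projective normality follows from exercise $5.14(d)$, Chapter II of [4] just as before.

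First I would fix homogeneous generators of $\mathbb C[U]^{G}=(\mathrm{Sym}(U^{*}))^{G}$. Since the characteristic is $0$, Noether's degree bound gives generators $h_1,\dots,h_s$ of degrees $e_1,\dots,e_s$ with $1\leq e_j\leq |G|=n$ for all $j$. The only feature of Cayley's embedding that I actually use is numerical: each $e_j\leq n$ divides $n!=|S_n|$. Next, exactly as in the type $A_n,B_n,C_n$ case, I take $f\in R_q$ and reduce to the case where $f=\prod_{j=1}^{s}h_j^{m_j}$ is a single monomial in the $h_j$, so that $\sum_{j=1}^{s}m_je_j=q\,n!$.

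The hard part will be the combinatorial factorization. I want to write the exponent vector $(m_1,\dots,m_s)$ as a sum of $q$ nonnegative vectors, each recording a sub-monomial of degree exactly $n!$, which would exhibit $f$ as a product of $q$ elements of $R_1$. Equivalently, the monoid $\{m\in\mathbb Z_{\geq 0}^{s}:\sum_j m_je_j\equiv 0\ \mathrm{mod}\ n!\}$ should be generated by $\{m:\sum_j m_je_j=n!\}$. This is Lemma 2.1 when $n!=\prod_j e_j$, but in general that equality fails, so I cannot quote Lemma 2.1 verbatim; what rescues the argument is the divisibility $e_j\mid n!$ for every $j$. The key technical step I expect to have to carry out is the divisibility strengthening of Lemma 2.1: a multiset of sizes each dividing $N$ and with total sum a multiple of $N$ can be split into blocks of size exactly $N$ (proved by extracting one block of size $N$ and inducting on $q$, the extraction being the genuinely delicate point). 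Granting this, I peel off some $g\in R_1$ with $g\mid f$ and $f/g\in R_{q-1}$, and induction on $q$ shows $f$ lies in the subalgebra generated by $R_1$; the remaining $\mathrm{Proj}$ and descent bookkeeping is identical to that in Theorem 3.1.
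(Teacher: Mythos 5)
Your scaffolding is fine: the descent of $\mathcal O(1)^{\otimes n!}$ (since $|G|=n$ divides $n!$), the identification of the quotient with $\mathrm{Proj}\,R$ for $R_q=(\mathrm{Sym}^{q\,n!}(U^{*}))^{G}$, the reduction to showing $R_1$ generates $R$, and the reduction to monomials in homogeneous generators of degrees $e_j\leq n$ (Noether's bound) are all correct. But the proof has a genuine gap exactly at what you call the ``genuinely delicate point'': the combinatorial statement you need --- that the monoid $\{m\in\mathbb Z_{\geq 0}^{s}:\sum_j m_je_j\equiv 0\ \mathrm{mod}\ n!\}$ is generated by $\{m:\sum_j m_je_j=n!\}$ whenever each $e_j$ divides $n!$ --- is never proved, and as you yourself note it is not Lemma 2.1 (which assumes $N=\prod_j e_j$). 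Nor is it a routine modification of it: the extraction of a ``block of size exactly $N$'' cannot be done greedily or locally. For instance, for $N=12$ the multiset $\{6,4,4\}$ consists of divisors of $12$ with sum $14\geq 12$, yet no sub-multiset sums to $12$; so any correct extraction argument must exploit, globally, that the total is an exact multiple of $N$, and this is a genuine piece of combinatorics (one can check the statement for prime powers, for $N=pq$, etc., but a general proof requires real work --- arguably as much as the corollary itself). Since your entire argument funnels through this unproved lemma, the proposal as written does not establish the corollary.

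The paper avoids this difficulty entirely, and the contrast is instructive: the Cayley embedding is used for much more than the numerical triviality $e_j\mid n!$. Writing $G=\{g_1,\dots,g_n\}$, letting $V$ be the natural representation of $S_n$ and $k=\dim U$, the embedding induces the $G$-equivariant, degree-preserving algebra surjection $\eta:\mathrm{Sym}(V^k)\to\mathrm{Sym}(U)$, $x_{il}\mapsto g_i(u_l)$. Noether's averaging argument then shows that the restriction $\tilde\eta:\mathrm{Sym}(V^k)^{S_n}\to\mathrm{Sym}(U)^{G}$ is still surjective: for $f\in\mathrm{Sym}(U)^G$, the symmetrization $f'=\frac{1}{n}\bigl(f(x_{11},\dots,x_{1k})+\dots+f(x_{n1},\dots,x_{nk})\bigr)$ is $S_n$-invariant and satisfies $\tilde\eta(f')=f$. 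Because $\tilde\eta$ preserves degrees, the generation statement of Theorem 3.1 for $S_n$ acting on $V^k$ (where the basic invariants have degrees $1,2,\dots,n$ with product exactly $n!$, so Lemma 2.1 applies as stated) is pushed forward to $\bigoplus_q(\mathrm{Sym}^{q\,n!}(U))^{G}$, with no new combinatorial lemma needed. To repair your argument you must either prove your strengthened monoid lemma from scratch or replace that step by this surjection argument.
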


\begin{proof}
Let $G = \{g_1, g_2, \cdots , g_n\}$ and let $\{u_1, u_2, \cdots , u_k\}$ be a 
basis of $U$. Let $V$ be the natural representation of the permutation group 
$S_n$. Let $\{x_1, x_2, \cdots , x_n\}$ be a basis of $V$; then the set 
$\{x_{11}, \cdots ,x_{n1}, \cdots , x_{1k}, \cdots , x_{nk}\}$ is a basis of 
$V^k$. 

Consider the Cayley embedding $G \hookrightarrow S_n$, $g \mapsto (g_j :=gg_i)$.
 Then 
\[ \eta : Sym (V^k) \rightarrow Sym (U), \,\, \,\, x_{il} \mapsto g_i(u_l)\] 
is a $G$-equivariant and degree preserving algebra epimorphism. 

Now we will use Noether's original argument (see page 2 of [14]) to show that
 the 
restriction map \[\tilde{\eta}: Sym (V^k)^{S_n} \rightarrow Sym (U)^G\] is 
surjective. For any $f = f(u_1, \cdots , u_k) \in  Sym (U)^G$, we define 
\[f': = \frac {1}{n}(f(x_{11}, x_{12}, \cdots ,x_{1k})+ \ldots +f((x_{n1}, 
x_{n2}, \cdots ,x_{nk})) \in Sym (V^k)^{S_n}.\] Then we have \[\tilde{\eta}(f')=
\frac{1}{n}(f(g_1(u_1), g_1(u_2), \cdots , g_1(u_k))+ \ldots + f(g_n(u_1), g_n(u_2), 
\cdots , g_n(u_k)))\]
\[= \frac{1}{n}(g_1f(u_1, u_2, \cdots , c_k)+ \ldots g_nf(u_1, u_2, \ldots , u_k))= f\]

Hence,$\tilde{\eta}(f')=f$ and $\tilde{\eta}$ is surjective. So the corollary 
follows from theorem (3.1).

\end{proof}

${\underline {\bf A \,\, counter \,\, example:}}$ 

Let $F$ be a field of characteristic $p \neq 2$ and $V$ be the natural 
representation of the permutation group $G = S_{p^s}, s \geq 2$ over $F$. 
Consider $U= \underbrace{V \oplus V \oplus \ldots \oplus V}_{(p^s)! \,\, 
\mbox{copies}}$. Then by therem (4.7) of [3] we have \[\beta(U, G) = max \{p^s, 
(p^s)!(p^s-1)\} = (p^s)!(p^s-1),\] where $\beta(U, G)$ denote the Noether 
number, which can be defined as the minimal number $t$, such that the algebra
$Sym(U^*)^G$ of invariants can be generated by finitely many elements of degree 
at most $t$. 

So there exists a homogeneous polynomial $ f \in (Sym^d U^*)^G;\,\,  
d = (p^s)!(p^s-1)$ which is not in the subalgebra generated by 
$(Sym^m U^*)^G; \,\, m \leq (d-1)$. 

Hence, $ f \in R_{(p^s-1)}= (Sym^{(p^s-1)|G|} U^*)^G$ but not in the subalgebra 
generated by $R_1= (Sym^{|G|} U^*)^G$. Thus, projective normality does not 
hold in this case.

{\it Remark 1:} We couldn't find any reference for the generators of 
$\mathbb C[V^m]$ for type $E_6, E_7, E_8$. We are now working on it. Due to 
time constraint, we will write it in the future work.

{\it Remark 2:} We beleive that from theorem (3.1), we will be able to prove 
the Projective normality result for any finite dimensional representation 
of any Weyl group. We are working on this problem.




\begin{thebibliography}{22}

\bibitem[1]{r1} C.Chevalley, Invariants of Finite Groups Generated by 
Reflections, Amer. J. Math. 77(1955), 778-782.
\bibitem[2]{r2} Huah Chu, Shou-Jen Hu, Ming-chang Kang, A  note on Projective 
normality, preprint.
\bibitem[3]{r3} P. Fleischmann, A new degree bound for Vector Invariants of 
Symmetric Groups, Trans. Amer. Math. Soc. 350 (1998), 1703-1712.
\bibitem[4]{r4} R. Hartshorne, Algebraic Geometry, Graduate Texts in Math., 52,
Springer-Verlag, New York-Heidelberg, 197.
\bibitem[5]{r5} J.E. Humphreys, Introduction to Lie algebras and representation
theory, Springer, Berlin Heidelberg, 1972.
\bibitem[6]{r6} J.E. Humphreys, Reflection Groups and Coxeter Groups, Cambridge
Univ. Press, Cambridge, 1990.
\bibitem[7]{r7} M. Hunziker, Classical invariant theory for finite reflection
groups, Transform. Groups 2 (1997) (2), pp. 147-163.
\bibitem[8]{r8} R.Kane, Reflection Groups and Invariant theory, CMS Books in
Mathematics, Springer-Verlag, 2001.
\bibitem[9]{r9} S.S. Kannan, S.K. Pattanayak, Pranab Sardar, Projective 
normality of finite groups quotients. Proc. Amer. Math. Soc. 137 (2009), no. 
3, pp. 863--867.
\bibitem[10]{r10} H.Kraft, P.Slodowy and T.A.Springer, Algebraic Transformation 
Groups and Invariant Theory, Birkhauser, 1989.
\bibitem[11]{r11} M.L. Mehta, Basic set of invariant polynomials for finite 
reflection groups, Com. in Alg., 16. (5), (1988), 1083-1098.
\bibitem[12]{r12} D.Mumford, J.Fogarty and F.Kirwan, Geometric Invariant theory,
Springer-Verlag, 1994. 
\bibitem[13]{r13} P.E.Newstead, Introduction to Moduli Problems and Orbit 
Spaces, TIFR Lecture Notes, 1978.
\bibitem[14]{r14} E.Noether, Der Endlichkeitssatz der Invarianten endlicher 
Gruppen, Math. Ann. 77, (1916), 89-92.
\bibitem[15]{r15} D.J.S. Robinson, A Course in the Theory of Groups, 2nd ed.,
Springer-Verlag, 1996. 
\bibitem[16]{r16} J.P.Serre, Groupes finis d'automorphisms d'anneaux locaux
reguliers, Colloq. d'Alg. Ecole Norm. de Jeunes Filles, Paris (1967), 1-11.
\bibitem[17]{r17} G. C. Shephard and J. A. Todd, Finite Unitary Reflection 
Groups, Canadian J. Math. 6 (1954), 274-304. 
\bibitem[18]{r18} N.R. Wallach, Invariant differential operators on a 
reductive Lie algebra and Weyl group representations, J. Amer. Math. Soc. 
6 (1993) (4), pp. 779-816. 
\bibitem[19]{r19} H. Weyl, The Classical Groups. Their Invariants and
Representations, Princeton Univ. Press (1946).

\end{thebibliography}
\end{document}